\pgfplotsset{every tick label/.append style={font=\footnotesize}}
\pgfplotsset{compat=1.14}
\newcolumntype{K}[1]{>{\centering\arraybackslash$}p{#1}<{$}}
\newcolumntype{R}{>{\raggedleft\arraybackslash}X}
\newcolumntype{L}{>{\raggedright\arraybackslash}X}
\newcolumntype{C}{>{\centering\arraybackslash}X}
\newcolumntype{A}{>{\columncolor{gray!25}}C}
\newcolumntype{a}{>{\columncolor{gray!25}}c}
\newlength{\tablen}
\newcolumntype{.}{D{.}{.}{-1}}
\renewcommand\p@subfigure{\arabic{figure}.}
\renewcommand\p@subtable{\arabic{table}.}
\setlist[itemize]{leftmargin=2.5\parindent}
\setlist[enumerate]{leftmargin=2.5\parindent}
\def\addlegendimage{\csname pgfplots@addlegendimage\endcsname}
\theoremstyle{plain}
\newtheorem{lemma}{Lemma}
\newtheorem{theorem}{Theorem}
\theoremstyle{definition}
\newtheorem{definition}{Definition}[section]
\newtheorem{example}{Example}
\theoremstyle{remark}
\newtheorem{remark}{Remark}
\let\@fnsymbol\@alph
\def\keywords{\vspace{.5em} 
{\noindent \textit{Keywords}: }}
\def\AMS{\vspace{.5em} 
{\noindent \textbf{\emph{MSC} class}: }}
\def\JEL{\vspace{.5em} 
{\noindent \textbf{\emph{JEL} classification number}: }}
\title{On the coincidence of optimal completions for small pairwise comparison matrices with missing entries}
\author{\href{https://sites.google.com/view/laszlocsato}{L\'aszl\'o Csat\'o}\thanks{~Corresponding author. Email: \emph{laszlo.csato@sztaki.hu} \newline
Institute for Computer Science and Control (SZTAKI), E\"otv\"os Lor\'and Research Network (ELKH), Laboratory on Engineering and Management Intelligence, Research Group of Operations Research and Decision Systems, Budapest, Hungary \newline
Corvinus University of Budapest (BCE), Institute of Operations and Decision Sciences, Department of Operations Research and Actuarial Sciences, Budapest, Hungary}
$\qquad \qquad$
Kolos Csaba \'Agoston\thanks{~Email: \emph{kolos.agoston@uni-corvinus.hu} \newline Corvinus University of Budapest (BCE), Institute of Operations and Decision Sciences, Department of Operations Research and Actuarial Sciences, Budapest, Hungary}
$\qquad \qquad$
S\'andor Boz\'oki\thanks{~Email: \emph{sandor.bozoki@sztaki.hu} \newline
Institute for Computer Science and Control (SZTAKI), E\"otv\"os Lor\'and Research Network (ELKH), Laboratory on Engineering and Management Intelligence, Research Group of Operations Research and Decision Systems, Budapest, Hungary \newline
Corvinus University of Budapest (BCE), Institute of Operations and Decision Sciences, Department of Operations Research and Actuarial Sciences, Budapest, Hungary}
}
\date{\today}
\def\Dedication{
\begin{small}
{\noindent
``\emph{We conclude that if the dimension is small, and the researcher knows a priori that the errors are small, and there is exactly one judgment for each pair of entities, there is little (only time, effort, and a little accuracy) to argue against using the eigenvector.}''\footnote{~Source: \citet[p.~405]{CrawfordWilliams1985}.}
}
\end{small}

\vspace{0.5cm} 
\justify }
\begin{document}

\maketitle
\thispagestyle{empty}
\Dedication

\begin{abstract}
\noindent
Incomplete pairwise comparison matrices contain some missing judgements. A natural approach to estimate these values is provided by minimising a reasonable measure of inconsistency after unknown entries are replaced by variables. Two widely used inconsistency indices for this purpose are Saaty's inconsistency index and the geometric inconsistency index, which are closely related to the eigenvector and the logarithmic least squares priority deriving methods, respectively. The two measures are proven to imply the same optimal filling for incomplete pairwise comparison matrices up to order four but not necessarily for order at least five.

\keywords{Analytic Hierarchy Process (AHP); decision analysis; eigenvalue method; incomplete pairwise comparisons; logarithmic least squares method}

\AMS{90-10, 90B50, 91B08}

\JEL{C44, D71}
\end{abstract}

\clearpage

\section{Introduction} \label{Sec1}

The pairwise comparison methodology is applied in many decision-making frameworks such as the famous Analytic Hierarchy Process (AHP), proposed by Saaty \citep{Saaty1977, Saaty1980}. Since the aim is to obtain the priorities for the alternatives, the resulting pairwise comparison matrix should be transformed into a weight vector. To that end, a number of procedures have been suggested \citep{ChooWedley2004}. The two most popular techniques are the eigenvector \citep{Saaty1977} and the logarithmic least squares (often called geometric mean) \citep{CrawfordWilliams1985} methods. They lead to the same solution if the pairwise comparisons are \emph{consistent}, that is, the direct comparison of alternatives $i$ and $j$ coincides with any indirect comparison of them through a third alternative $k$. In addition, the two algorithms are verified to give the same weights if the number of alternatives is at most three \citep[p.~393]{CrawfordWilliams1985}.

Several studies have examined the similarity of the weight vectors derived by the eigenvector and the logarithmic least squares methods for at least four alternatives. According to the Monte Carlo simulations of \citet{HermanKoczkodaj1996}, the priorities are generally closer if the matrix is less inconsistent. \citet{KulakowskiMazurekStrada2022} give analytical proof of the convergence. \citet{MazurekKulakowskiErnstStrada2022} focus on the differences between the ordinal rankings obtained using these two procedures.

However, a complete pairwise comparison matrix contains $n(n-1)/2$ comparisons, which may be difficult to collect. First, the number of entries is a quadratic function of the number of alternatives. Second, the experts can be unable to compare some items \citep{Harker1987b}. Third, the necessary information might be impossible to acquire, for example, because the pairwise comparisons are derived from the results of matches in sports tournaments and some players or teams have not met each other \citep{BozokiCsatoTemesi2016, ChaoKouLiPeng2018, Csato2013a}.

In this case, the algorithms suggested for complete pairwise comparison matrices can be used only after all missing judgements are estimated. A straightforward approach is considering an optimisation problem where the unknown comparisons are substituted by variables and an inconsistency index provides the objective function \citep{KoczkodajHermanOrlowski1999}.
\citet{ShiraishiObataDaigo1998} and \citet{ShiraishiObata2002} have proposed this idea for the well-established inconsistency index of Saaty. The implied minimisation problem has been analysed and discussed by \citet{BozokiFulopRonyai2010}. \citet{BozokiFulopRonyai2010} also prove the necessary and sufficient condition for the uniqueness of the optimal completion according to the geometric inconsistency index \citep{CrawfordWilliams1985, AguaronMoreno-Jimenez2003}, which minimises a logarithmic least squares objective function. The optimal completion can be obtained by solving a system of linear equations. 

We do not know any result on the relationship of the optimal completions according to these two approaches if the pairwise comparison matrix contains some missing entries. The current paper makes an important contribution to this issue. In particular, it is verified that the two methods lead to the same result if the incomplete pairwise comparison matrix contains at most four alternatives. Our finding is non-trivial because Saaty's inconsistency index $CI$ and the geometric inconsistency index $GCI$ are not functionally dependent for $n=4$ \citep{Cavallo2020}. Unsurprisingly, the theorem does not extend to the case of five alternatives.

The remainder of the study is organised as follows. The theoretical background is presented in Section~\ref{Sec2}. The connection between the two optimal completions is discussed in Section~\ref{Sec3}. Finally, Section~\ref{Sec4} offers concluding remarks.

\section{Basic mathematical definitions} \label{Sec2}

The numerical answers of the decision-maker to questions such as ``How many times alternative $i$ is preferred to alternative $j$?'' are collected into a matrix, but we allow for missing comparisons (indicated by $\ast$), too.
Denote by $\mathbb{R}_+$ the set of positive numbers and by $\mathbb{R}^n_+$ the set of positive vectors of size $n$.

\begin{definition} \label{Def1}
\emph{Incomplete pairwise comparison matrix}:
Matrix $\mathbf{A} = \left[ a_{ij} \right]$ is an \emph{incomplete pairwise comparison matrix} if $a_{ij} \in \mathbb{R}_+ \cup \{ \ast \}$ such that for all $1 \leq i,j \leq n$, $a_{ij} \in \mathbb{R}_+$ implies $a_{ji} = 1 / a_{ij}$ and $a_{ij} = \ast$ implies $a_{ji} = \ast$.
\end{definition}

The set of incomplete pairwise comparison matrices of order $n$ is denoted by $\mathcal{A}_{\ast}^{n \times n}$.

An incomplete pairwise comparison matrix $\mathbf{A} = \left[ a_{ij} \right]$ is called complete if $a_{ij} \neq \ast$ for all $1 \leq i,j \leq n$.


\begin{definition} \label{Def2}
\emph{Weighting method}:
A \emph{weighting method} associates a weight vector $\mathbf{w} \in \mathbb{R}^n_+$ to any incomplete pairwise comparison matrix $\mathbf{A} = \left[ a_{ij} \right] \in \mathcal{A}_{\ast}^{n \times n}$.
\end{definition}

\begin{definition} \label{Def3}
\emph{Logarithmic least squares method} \citep{Kwiesielewicz1996, TakedaYu1995}:
Let $\mathbf{A} = \left[ a_{ij} \right] \in \mathcal{A}_{\ast}^{n \times n}$ be an incomplete pairwise comparison matrix. The weight vector $\mathbf{w} = \left[ w_i \right] \in \mathbb{R}^n_+$ provided by the \emph{logarithmic least squares method} is the optimal solution $\mathbf{w}$ of the following problem:
\begin{align} \label{eq_LLSM}
\min & \sum_{i,j: \, a_{ij} \neq \ast} \left[ \log a_{ij} - \log \left( \frac{w_i}{w_j} \right) \right]^2 \nonumber \\
\text{subject to } & w_i > 0 \text{ for all } i=1,2, \dots n.
\end{align}
\end{definition}

This approach has originally been suggested for complete pairwise comparison matrices \citep{CrawfordWilliams1985, DeGraan1980, deJong1984, Rabinowitz1976, WilliamsCrawford1980}. The objective function \eqref{eq_LLSM} takes only known comparisons into account, that is, the approximation of unknown comparisons is assumed to be perfect.

\begin{definition} \label{Def4}
\emph{Logarithmic least squares optimal completion}:
Let $\mathbf{A} = \left[ a_{ij} \right] \in \mathcal{A}_{\ast}^{n \times n}$ be an incomplete pairwise comparison matrix.
The \emph{logarithmic least squares optimal completion} is $\mathbf{B} = \left[ b_{ij} \right]$ if $b_{ij} = a_{ij}$ for all $a_{ij} \neq \ast$ and $b_{ij} = w_i / w_j$ otherwise, where $\mathbf{w} = \left[ w_i \right]$ is the optimal solution of \eqref{eq_LLSM}.
\end{definition}

Another natural idea is to replace the $m$ missing comparisons with variables $\mathbf{x} \in \mathbb{R}^m_+$, pick up an inconsistency index (see \citet{Brunelli2018} for a comprehensive survey of them), and minimise the inconsistency of the resulting complete pairwise comparison matrix $\mathbf{A}(\mathbf{x})$. It can be seen that logarithmic least squares optimal completion minimises the geometric inconsistency index \citep{CrawfordWilliams1985, AguaronMoreno-Jimenez2003}.

According to \citet{Saaty1977}, the level of inconsistency is a monotonic function of the dominant eigenvalue for any complete pairwise comparison matrix. Thus, the corresponding optimisation problem is as follows.

\begin{definition} \label{Def5}
\emph{Eigenvector method} \citep{ShiraishiObata2002, ShiraishiObataDaigo1998}:
Let $\mathbf{A} = \left[ a_{ij} \right] \in \mathcal{A}_{\ast}^{n \times n}$ be an incomplete pairwise comparison matrix. The weight vector $\mathbf{w} = \left[ w_i \right] \in \mathbb{R}^n_+$ provided by the \emph{eigenvector method} is the optimal solution $\mathbf{w}$ of the following problem:
\begin{align} \label{eq_EM}
\min_{\mathbf{x} \in \mathbb{R}^m_+} \lambda_{\max} \left( \mathbf{A}(\mathbf{x}) \right) \nonumber \\
\lambda_{\max} \left( \mathbf{A}(\mathbf{x}) \right) \mathbf{w} = \mathbf{A}(\mathbf{x}) \mathbf{w}.
\end{align}
\end{definition}

According to Definition~\ref{Def5}, the variables in $\mathbf{x}$ are determined to minimise the dominant eigenvalue of the corresponding complete pairwise comparison matrix, and the priorities are given by the associated right eigenvector as suggested in the AHP methodology.

\begin{definition} \label{Def6}
\emph{Eigenvalue optimal completion}:
Let $\mathbf{A} = \left[ a_{ij} \right] \in \mathcal{A}_{\ast}^{n \times n}$ be an incomplete pairwise comparison matrix.
The \emph{eigenvalue optimal completion} is $\mathbf{B} = \left[ b_{ij} \right]$ if $b_{ij} = a_{ij}$ for all $a_{ij} \neq \ast$ and the value of $b_{ij}$ is determined by the corresponding coordinate of vector $\mathbf{x}$ that is associated with the optimal solution of \eqref{eq_EM} if $a_{ij} = \ast$.
\end{definition}

Graph representation offers a convenient tool to classify incomplete pairwise comparison matrices \citep{SzadoczkiBozokiTekile2022}.

\begin{definition} \label{Def7}
\emph{Graph representation}:
Let $\mathbf{A} = \left[ a_{ij} \right] \in \mathcal{A}_{\ast}^{n \times n}$ be an incomplete pairwise comparison matrix. It is represented by the undirected graph $G = (V,E)$ such that
\begin{itemize}
\item
there is a one-to-one mapping between the vertex set $V = \left\{ 1,2, \dots ,n \right\}$ and the alternatives;
\item
the edge set $E$ is determined by the known comparisons: $(i,j) \in E \iff a_{ij} \neq \ast$.
\end{itemize}
\end{definition}

These concepts can be illustrated by the following example.

\begin{example} \label{Examp1}
Consider the following incomplete pairwise comparison matrix of order four, in which $a_{13}$ (thus $a_{31}$) and $a_{24}$ (thus $a_{42}$) remain undefined:
\[
\mathbf{A} = \left[
\begin{array}{K{3em} K{3em} K{3em} K{3em}}
    1     	& a_{12}  	& \ast   	& a_{14} \\
    a_{21}	& 1       	& a_{23}	& \ast   \\
   \ast		& a_{32}	& 1      	& a_{34} \\
    a_{41}	& \ast  	& a_{43}	& 1 \\
\end{array}
\right].
\]

\begin{figure}[t!]
\centering
\begin{tikzpicture}[scale=1, auto=center, transform shape, >=triangle 45]
\tikzstyle{every node}=[draw,shape=circle];
  \node (n1) at (135:2) {$1$};
  \node (n2) at (45:2)  {$2$};
  \node (n3) at (315:2) {$3$};
  \node (n4) at (225:2) {$4$};

  \foreach \from/\to in {n1/n2,n1/n4,n2/n3,n3/n4}
    \draw (\from) -- (\to);
\end{tikzpicture}

\caption{The graph representation of the incomplete \\ pairwise comparison matrix $\mathbf{A}$ in Example~\ref{Examp1}}
\label{Fig1}
\end{figure}
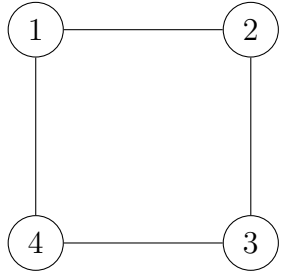

Figure~\ref{Fig1} shows the associated graph $G$.
\end{example}

The necessary and sufficient conditions for the uniqueness of the logarithmic least squares and the eigenvalue optimal completions, respectively, are the same.

\begin{lemma} \label{Lemma1}
The optimal solution to both optimisation problems \eqref{eq_LLSM} and \eqref{eq_EM} is unique if and only if the graph representing the incomplete pairwise comparison matrix is connected.
\end{lemma}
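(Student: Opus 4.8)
The plan is to prove both equivalences by passing to logarithmic coordinates, where each problem becomes amenable to a convexity analysis, and then to read off the degrees of freedom from the structure of the graph $G$. Throughout I would interpret ``unique'' as unique up to a common positive scaling of $\mathbf{w}$ (equivalently, as uniqueness of the resulting completion $\mathbf{B}$), since both objectives depend on $\mathbf{w}$ only through the ratios $w_i/w_j$, and the Perron eigenvector in \eqref{eq_EM} is in any case determined only up to scaling.

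For the logarithmic least squares problem \eqref{eq_LLSM} I would substitute $y_i = \log w_i$ and $c_{ij} = \log a_{ij}$, turning the objective into the quadratic form $f(\mathbf{y}) = \sum_{(i,j):\, a_{ij}\neq\ast}\bigl[c_{ij}-(y_i-y_j)\bigr]^2$. Using $c_{ji}=-c_{ij}$, each unordered edge contributes twice with equal value, and the quadratic part of $f$ is a positive multiple of $\mathbf{y}^{\top} L\,\mathbf{y}$, where $L$ is the Laplacian of $G$; hence $f$ is convex with Hessian a positive multiple of $L$. Since $L$ is positive semidefinite with $\ker L$ spanned by the indicator vectors of the connected components, the minimiser set is an affine subspace of dimension equal to the number of components $c$. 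When $G$ is connected this kernel is exactly $\operatorname{span}(\mathbf{1})$, the scaling direction, so the completion is unique; when $c\ge 2$ there remain $c-1$ independent inter-component shifts of $\mathbf{y}$ that leave $f$ unchanged but alter the cross-component ratios $w_i/w_j$, so the completion is not unique. This settles \eqref{eq_LLSM}.

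For the eigenvalue problem \eqref{eq_EM} I would set $x_k = e^{t_k}$ for the $m$ missing entries and study $\phi(\mathbf{t}) = \lambda_{\max}\bigl(\mathbf{A}(e^{\mathbf{t}})\bigr)$. The matrix $\mathbf{A}(\mathbf{x})$ is positive, hence irreducible, and its entries are log-linear in $\mathbf{t}$; by the log-convexity of the Perron root for matrices with log-convex entries (Kingman's theorem, also established in this setting by \citet{BozokiFulopRonyai2010}), $\phi$ is convex, so its local minima are global and its minimiser set is convex. If $G$ is disconnected with components $C_1,\dots,C_c$, I would exhibit the flat directions explicitly: multiplying every missing entry joining two components by a common factor $\mu$ is a diagonal similarity transform of $\mathbf{A}(e^{\mathbf{t}})$ that fixes all within-component (in particular all known) entries, hence leaves $\lambda_{\max}$ unchanged while moving the completion. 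This yields a $(c-1)$-parameter family of optimal completions, already visible in the two-vertex empty graph, where $\lambda_{\max}=2$ independently of the single free entry.

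The hard direction, and the step I expect to be the main obstacle, is showing that connectivity forces uniqueness for \eqref{eq_EM}: convexity alone is not enough, and I must rule out flatness of $\phi$ along every nontrivial direction. The plan is to assume two distinct optimal completions, invoke convexity to conclude that $\phi$ is constant along the connecting segment, and then differentiate the Perron root via the perturbation identity $\partial\lambda_{\max}/\partial t_k = v_i w_j\,a_{ij}-v_j w_i\,a_{ji}$ expressed through the left and right Perron eigenvectors $v,w$ of $\mathbf{A}(e^{\mathbf{t}})$. I would then show that vanishing of the second directional derivative throughout the segment forces the perturbation to decouple the matrix along a vertex bipartition carrying no known comparisons across it, i.e.\ a disconnection of $G$, contradicting connectivity. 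Making this curvature computation rigorous, and matching the residual flat directions exactly to the inter-component scalings as in the Laplacian count above, is the technical crux; the remaining implications follow from the convexity and the explicit flat directions already described.
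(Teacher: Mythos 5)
The paper does not prove this lemma at all: it simply cites \citet[Theorems~2 and 4]{BozokiFulopRonyai2010}, so any self-contained argument you give is necessarily a different route. Your treatment of \eqref{eq_LLSM} is complete and correct, and is essentially the standard Laplacian argument behind Theorem~4 of that reference: in logarithmic coordinates the objective is a quadratic whose Hessian is a multiple of the graph Laplacian, the kernel is spanned by the component indicators, and the minimiser set modulo the all-ones direction has dimension $c-1$. Your ``disconnected $\Rightarrow$ non-unique'' direction for \eqref{eq_EM} via diagonal similarity on the inter-component entries is also correct and matches the known flat directions.

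The genuine gap is the one you flag yourself: ``connected $\Rightarrow$ unique'' for \eqref{eq_EM}. Convexity of $\mathbf{t}\mapsto\lambda_{\max}(\mathbf{A}(e^{\mathbf{t}}))$ gives you a convex optimal set, but to conclude it is a single point you need the \emph{equality case} of the log-convexity (Kingman-type) inequality: flatness of $\lambda_{\max}$ along a segment must force the corresponding multiplicative perturbation to be a diagonal similarity, which in turn must be shown to be impossible unless it respects a bipartition of the vertices with no known comparison across it. Your plan states this as the goal but supplies no mechanism for it --- the first-derivative identity $\partial\lambda_{\max}/\partial t_k = v_i w_j a_{ij} - v_j w_i a_{ji}$ only tells you that at an optimum each missing entry satisfies $a_{ij} = \sqrt{v_j w_i/(v_i w_j)}$ up to normalisation; it does not by itself rule out a one-parameter family of such optima. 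This equality-case analysis is exactly the content of Theorem~2 in \citet{BozokiFulopRonyai2010} and is the hardest part of the lemma, so as written your proposal proves the LLSM half and one direction of the eigenvalue half, but leaves the remaining direction as an unexecuted plan rather than a proof.
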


\begin{proof}
See \citet[Theorems~2 and 4]{BozokiFulopRonyai2010}.
\end{proof}

Lemma~\ref{Lemma1} demands a natural requirement for uniqueness since it is impossible to associate priorities for two distinct sets of alternatives if they are not compared, hence, the corresponding graph is disconnected.

Naturally, Lemma~\ref{Lemma1} does not imply that the logarithmic least squares and the eigenvalue optimal completions coincide if graph $G$ is connected. This inspires our research question: When are the corresponding complete pairwise comparison matrices the same?

\section{The main result} \label{Sec3}

The investigation is worth beginning with small problems in the number of alternatives.
The case of $n=3$ is almost trivial. If one comparison is missing and the associated graph is connected, then it should be a spanning tree. Consequently, there exists a unique consistent completion $\mathbf{B} = \left[ b_{ij} \right]$, for which the optimum of \eqref{eq_LLSM} is zero and the optimum of \eqref{eq_EM} equals $n$, that is, both objective functions reach their theoretical minimum. In other words, if $a_{ik} = \ast$, then $b_{ik} = a_{ij} a_{jk}$.

Somewhat surprisingly, the two optimal completions coincide even for $n=4$.

\begin{theorem} \label{Theo1}
Let $\mathbf{A} = \left[ a_{ij} \right] \in \mathcal{A}_{\ast}^{4 \times 4}$ be an incomplete pairwise comparison matrix of size four such that the associated graph $G$ is connected.
The logarithmic least squares and the eigenvalue optimal completions are the same, independently of the number of unknown comparisons.
\end{theorem}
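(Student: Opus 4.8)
The plan is to reduce the statement to a short list of graph types and then characterise the eigenvalue-optimal completion by a first-order condition that the logarithmic least squares completion can be checked to satisfy. By Lemma~\ref{Lemma1} both completions exist and are unique whenever $G$ is connected, so it suffices to exhibit one completion solving both problems. First I would classify the connected graphs on four vertices by their number of edges: the complete graph ($m=0$ missing entries, where there is nothing to fill), a spanning tree ($m=3$), the two unicyclic graphs with $m=2$ (the four-cycle $C_4$ and the ``paw'', a triangle with one pendant vertex), and $K_4$ minus an edge ($m=1$). The tree case is immediate: a spanning tree admits a unique consistent completion, for which the objective of~\eqref{eq_LLSM} attains its theoretical minimum $0$ and $\lambda_{\max}$ of~\eqref{eq_EM} attains its theoretical minimum $n$, exactly as in the $n=3$ discussion, so the two completions coincide. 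A pendant vertex can be completed consistently with the weights of the subgraph it hangs from, which should reduce the paw to the triangle on the remaining three alternatives, where the two methods agree because $n=3$. This leaves the genuinely cyclic cases $C_4$ and $K_4-e$ as the real content.

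For these I would work with the first-order optimality condition of~\eqref{eq_EM}. Writing each missing entry as $a_{ij}=e^{t}$ (hence $a_{ji}=e^{-t}$), every entry of $\mathbf{A}(\mathbf{x})$ is a log-linear, hence log-convex, function of the logarithmic variables; by the log-convexity of the spectral radius of a nonnegative matrix with log-convex entries, $\log\lambda_{\max}$ is a convex function of these variables. Since $\lambda_{\max}\to\infty$ as any missing entry tends to $0$ or $\infty$, the minimiser is interior and is characterised by a vanishing gradient, and by Lemma~\ref{Lemma1} it is unique. Using the standard formula $\partial\lambda_{\max}/\partial a_{kl}=u_k w_l/(u^{\top}w)$ for a simple Perron root, together with the reciprocity $a_{ji}=1/a_{ij}$, the stationarity condition for a filled edge $(i,j)$ becomes
\[
b_{ij}^2=\frac{w_i\,u_j}{w_j\,u_i},
\]
where $w$ and $u$ are the right and left Perron eigenvectors of the completed matrix; equivalently $\partial_{a_{ij}}\det\!\left(\lambda_{\max}I-\mathbf{A}(\mathbf{x})\right)=0$ at the optimum.

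The crux is then to verify that the logarithmic least squares completion $\mathbf{B}$ meets this condition. Since $b_{ij}=w_i^{L}/w_j^{L}$ on every filled edge, with $w^{L}$ the solution of~\eqref{eq_LLSM}, the requirement reduces to the algebraic identity that the vector $q_i:=(w_i^{L})^2\,u_i^{*}/w_i^{*}$ is constant along each missing edge, where $w^{*},u^{*}$ are the Perron eigenvectors of $\mathbf{B}$. Here the dimension is decisive: the characteristic polynomial of a $4\times4$ reciprocal matrix has a vanishing coefficient at $\lambda^{n-2}=\lambda^{2}$, and its remaining coefficients are symmetric functions of the triangle products and the $4$-cycle product, so the derivative condition collapses to a finite polynomial relation in the (four or five) known entries and the logarithmic least squares weights, which I would confirm by elimination for $C_4$ and for $K_4-e$. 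Once stationarity is established, convexity makes $\mathbf{B}$ a global minimiser of $\lambda_{\max}$, and Lemma~\ref{Lemma1} forces it to be \emph{the} eigenvalue-optimal completion, proving the theorem.

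The hard part will be exactly this last verification. It couples two a priori unrelated objects --- the logarithmic least squares weights, which solve a linear system in the logarithms of the entries, and the Perron eigenvectors of the completed matrix, which solve a polynomial eigenproblem --- and asks that a specific combination of them be balanced across the missing edges. What makes it tractable for $n=4$ is the collapse of the characteristic polynomial just described; the same computation does not simplify for $n\ge5$, consistent with the failure announced after the theorem. Along the way I would also confirm the technical points used implicitly: simplicity of the Perron root (so that implicit differentiation of $\det(\lambda I-\mathbf{A})$ is legitimate), attainment of the minimum in the interior, and that the pendant reduction for the paw is valid for both methods simultaneously.
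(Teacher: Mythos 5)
Your overall strategy is the same as the paper's: both completions are unique by Lemma~\ref{Lemma1}, $\lambda_{\max}$ is convex in the logarithmic variables so that stationarity suffices for global optimality, and one then checks case by case that the logarithmic least squares completion is a stationary point of $\lambda_{\max}$. Your graph classification (spanning trees, the paw, $C_4$, $K_4-e$) matches the paper's four cases after its diagonal-similarity normalisation to the three-parameter form \eqref{4x4xzy}, and your first-order condition $b_{ij}^2=w_iu_j/(w_ju_i)$ is a correct reformulation of the vanishing of $\partial\lambda_{\max}/\partial t$. However, there are two genuine gaps. First, the decisive step --- verifying that the logarithmic least squares completion actually satisfies this stationarity condition for $C_4$ and $K_4-e$ --- is only announced (``which I would confirm by elimination''), not carried out; this is precisely where the paper does the work, computing the characteristic polynomial $\lambda^4-4\lambda^3+p\lambda+q$ explicitly and checking symbolically that $\partial\lambda_i/\partial x=0$ at $x=\sqrt{yz}$ (for $K_4-e$) and at $y=\sqrt{x}$, $z=x^{3/4}$ (for $C_4$). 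You correctly identify this as ``the hard part'', but as written the proof of the only non-trivial cases is missing; you would also want to perform the normalisation first, since otherwise the elimination runs over five or six known entries instead of two or three.

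Second, your reduction of the paw is not justified. The claim that the eigenvalue-optimal completion of a pendant vertex equals the consistent extension by the \emph{Perron weights} of the complete triangle (i.e.\ $b_{1j}=a_{12}u_2/u_j$, which is not the same as $b_{1j}=a_{12}a_{2j}$) is itself non-trivial: it requires either the same first-order computation you are deferring, or an appeal to the inequality $\lambda_{\max}(\mathbf{A})\ge\lambda_{\max}(\mathbf{A}')+1$ for principal submatrices of positive reciprocal matrices together with its equality condition. The paper instead handles this configuration directly (its Case~2, where the common optimal filling is $x=z^{2/3}$, $y=z^{1/3}$) by the same symbolic verification as the other cases. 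On the positive side, your classification is slightly more careful than the paper's in one respect: the star $K_{1,3}$ is a connected spanning tree with no Hamiltonian path, so it is not literally captured by the paper's canonical form with $x$, $y$, $z$ all missing, whereas your tree argument (unique consistent completion attaining both theoretical minima, $0$ and $n$) covers it.
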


\begin{proof}
First, we show that the pairwise comparison matrix can be considered in the form of 
\begin{equation} \label{4x4xzy}
\mathbf{A} = \left[
\begin{array}{K{2em} K{2em} K{2em} K{2em}}
1             &                1               &             y            &         x \\
1             &                1               &             1            &         z \\
1/y           &                1               &             1            &         1 \\
1/x           &               1/z              &             1            &         1 
\end{array}
\right]
\end{equation}
without loss of generality.

See \citet[Formula~(2)]{FernandesFurtado2022} for the eigenvector method.

The sufficiency of representation in the form \eqref{4x4xzy} for the logarithmic least squares method follows from the fact that if the $i$th row is multiplied by a positive scalar (and, simultaneously, the $i$th column is divided by it), then the corresponding coordinate $w_i$ of the optimal weight vector $\mathbf{w} = \left[ w_i \right]$ is the (same) multiple of the original one before normalisation. 
Multiply the first, second, and fourth rows of a general pairwise comparison matrix 
\[ \label{4x4abcdef}
\left[
\begin{array}{K{2em} K{2em} K{2em} K{2em}}
1             &                a               &             b            &         c \\
1/a           &                1               &             d            &         e \\
1/b           &               1/d              &             1            &         f \\
1/c           &               1/e              &            1/f           &         1 
\end{array}
\right]
\]
by $1 / (ad)$, $1/d$, and $f$, respectively, and divide the first, second, and fourth columns by these numbers to get
\[ \label{4x4abcdef_mod}
\left[
\begin{array}{K{3.5em} K{3.5em} K{3.5em} K{3.5em}}
1              &                1               &      b/ad			       &         c/(adf) 		\\
1              &                1               &             1            &         e/(df)  		\\
ad/b		   &                1               &             1            &                 1      \\
adf/c		   &        df/e		            &             1            &                 1 
\end{array}
\right],
\]
which has exactly the form of \eqref{4x4xzy}.

If the coordinate transformation $x=e^t$, $y=e^u$, $z=e^v$ is applied, $\lambda_{\max}(x,y,z) = \lambda_{\max} \left( e^t,e^u,e^v \right)$ becomes a strictly convex function in $t,u,v \in \mathbb{R}$ \citep[Section~3]{BozokiFulopRonyai2010}. This makes the first-order conditions sufficient for minimality.

Four possible cases shall be discussed.

\vspace{0.25cm}
\noindent
\textbf{Case 1: One comparison ($x$) is missing} \\
If $x$ is missing in \eqref{4x4xzy}, then the logarithmic least squares optimal completion is $x = \sqrt{yz}$, see (\ref{eq_LLSM}), Lemma \ref{Lemma1} and the system of linear equations in the proof of \citet[Theorem~4]{BozokiFulopRonyai2010}.

Based on the calculations of \citet[Formulas~(12) and (13)]{FernandesFurtado2022} for $n=4$, the characteristic polynomial of matrix \eqref{4x4xzy} is $\lambda^4 - 4\lambda^3 + p \lambda + q$, where
\[
p = - z - \frac{1}{z} - y -\frac{1}{y} - \frac{x}{y} - \frac{y}{x} -\frac{x}{z} - \frac{z}{x} + 8, \text{ and}
\]
\[
q = - x - \frac{1}{x} + y + \frac{1}{y} + z + \frac{1}{z} + \frac{x}{y}  + \frac{y}{x}  + \frac{x}{z} + \frac{z}{x} - \frac{y}{z} - \frac{z}{y} - \frac{x}{yz} - \frac{yz}{x} - 2 .
\]

Symbolic calculations by Maple reveal that 
\[
\left. \frac{\partial \lambda_i}{\partial x} \right\vert_{x=\sqrt{yz}} = 0 \qquad \text{ for all } {i=1,2,3,4},
\]
namely, all eigenvalues take an extremal value at $x = \sqrt{yz}$.
Consequently, 
\[
\left.\frac{\partial \lambda_{\max}}{\partial x}\right\vert_{x=\sqrt{yz}} = 0, 
\]
which, taking the argument above into consideration, implies that $\lambda_{\max}$ is indeed minimized at $x = \sqrt{yz}$. \\

\noindent
\textbf{Case 2: Two comparisons ($x, y$) are missing in the same row/column} \\
If $x$ and $y$ are missing in \eqref{4x4xzy}, then the logarithmic least squares optimal completion is given by $x = {z}^{2/3}$ and $y = {z}^{1/3}$.
According to symbolic calculations,
\[
\left.\frac{\partial \lambda_i}{\partial x}\right|_{x={z}^{2/3}, \, y={z}^{1/3}} = 
\left.\frac{\partial \lambda_i}{\partial y}\right|_{x={z}^{2/3}, \, y={z}^{1/3}} 
= 0 \qquad \text{ for all } {i=1,2,3,4},
\]
thus,
\[
\left.\frac{\partial \lambda_{\max}}{\partial x}\right|_{x={z}^{2/3}, \, y={z}^{1/3}} =
\left.\frac{\partial \lambda_{\max}}{\partial y}\right|_{x={z}^{2/3}, \, y={z}^{1/3}} = 
0.
\]

\noindent
\textbf{Case 3: Two comparisons ($y, z$) are missing in different rows/columns} \\
If $y$ and $z$ are missing in \eqref{4x4xzy}, then the logarithmic least squares optimal completion is given by $y = \sqrt{x}$ and $z = {x}^{3/4}$.
According to symbolic calculations,
\[
\left.\frac{\partial \lambda_i}{\partial y}\right|_{y=\sqrt{x}, \, z={x}^{3/4}} = 
\left.\frac{\partial \lambda_i}{\partial z}\right|_{y=\sqrt{x}, \, z={x}^{3/4}}  
= 0 \qquad \text{ for } {i=1,2,3,4},
\]
thus,
\[
\left.\frac{\partial \lambda_{\max}}{\partial y}\right|_{y=\sqrt{x}, \, z={x}^{3/4}} = 
\left.\frac{\partial \lambda_{\max}}{\partial z}\right|_{y=\sqrt{x}, \, z={x}^{3/4}} = 0.
\]

\noindent
\textbf{Case 4: Three comparisons ($x, y, z$) are missing} \\
If $x$, $y$, and $z$ are all missing in \eqref{4x4xzy}, then there is a unique consistent completion given by $x = y = z = 1$, and the minimum of $\lambda_{\max}$ is equal to 4.

The proof is completed because the associated graph $G$ is guaranteed to be disconnected if there are at least four missing comparisons.
\end{proof}

If the graph $G$ representing the incomplete pairwise comparison matrix $\mathbf{A} \in \mathcal{A}_{\ast}^{4 \times 4}$ is disconnected, then both optimisation problems \eqref{eq_LLSM} and \eqref{eq_EM} have an infinite number of solutions.

Theorem~\ref{Theo1} cannot be generalised by increasing the number of alternatives.

\begin{lemma} \label{Lemma2}
The logarithmic least squares and the eigenvalue optimal completions might be different for incomplete pairwise comparison matrices of order five.
\end{lemma}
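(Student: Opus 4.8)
Because the claim is purely existential, the natural strategy is to exhibit a single explicit counterexample and verify that the two completions disagree on it. The plan is to fix a concrete incomplete matrix $\mathbf{A} \in \mathcal{A}_{\ast}^{5 \times 5}$ whose representing graph $G$ is connected---for instance one in which exactly one comparison is missing, so that $G$ is $K_5$ with a single edge deleted and connectedness is immediate. By Lemma~\ref{Lemma1} both the logarithmic least squares and the eigenvalue optimal completions are then unique, so it suffices to compute each of the two optimal values of the missing entry and to show that they differ.

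The logarithmic least squares side is the easy one. Writing $w_i = e^{s_i}$, the objective in \eqref{eq_LLSM} is a positive semidefinite quadratic form in the $s_i$, and its first-order conditions reduce to the (weighted graph-Laplacian) system of linear equations described in the proof of \citet[Theorem~4]{BozokiFulopRonyai2010}. For a fixed numerical choice of the known entries this linear system has rational coefficients and can be solved exactly, yielding the missing entry $b^{\mathrm{LLSM}} = w_i / w_j$ in closed form. Equivalently, one minimises the geometric inconsistency index directly.

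The eigenvalue side carries the real work. Here I would form the characteristic polynomial of $\mathbf{A}(x)$, a degree-five polynomial whose coefficients are rational functions of the single variable $x$, and minimise its Perron root $\lambda_{\max}(x)$ over $x > 0$. As in the proof of Theorem~\ref{Theo1}, the substitution $x = e^{t}$ makes $\lambda_{\max}$ strictly convex in $t$ \citep[Section~3]{BozokiFulopRonyai2010}, so the unique global minimiser is the stationary point determined by $\partial \lambda_{\max}/\partial x = 0$. Solving this stationarity condition (symbolically in Maple, or numerically with a certified enclosure) produces the eigenvalue-optimal value $b^{\mathrm{EM}}$ of the missing entry.

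The proof then concludes by checking $b^{\mathrm{LLSM}} \neq b^{\mathrm{EM}}$ for the chosen instance. The main obstacle is precisely this comparison: unlike the $n=4$ situation of Theorem~\ref{Theo1}, there is no elementary closed form for the Perron root of a $5 \times 5$ matrix, and---crucially---the stationarity condition $\partial \lambda_{\max}/\partial x = 0$ no longer collapses onto the linear logarithmic least squares condition, which is exactly why the coincidence can now fail. To make the conclusion rigorous rather than merely numerical, I would pick the known entries so that the matrix is visibly inconsistent (forcing a sizeable gap between the two optima) and then certify the strict inequality exactly: both $b^{\mathrm{LLSM}}$ and $b^{\mathrm{EM}}$ are algebraic numbers defined by polynomials with rational coefficients, so their distinctness can be established by an exact symbolic computation, immune to rounding.
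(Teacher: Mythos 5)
Your strategy coincides with the paper's: the authors exhibit one explicit $5\times 5$ incomplete matrix with a single missing comparison $a_{15}$ (so the graph is connected and Lemma~\ref{Lemma1} gives uniqueness), compute the two optimal values of that entry ($b_{15}=0.1705$ versus $c_{15}=0.1798$), and observe that they differ. The only thing your write-up lacks is the actual witness---an existential claim needs a concrete matrix on the page, not just the recipe for finding one---though your remarks on certifying the inequality exactly via algebraic numbers are, if anything, more careful than the paper's purely numerical report.
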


\begin{proof}
Consider the following pairwise comparison matrix:
\[
\mathbf{A} = \left[
\begin{array}{K{2em} K{2em} K{2em} K{2em} K{2em}}
    1     &  1/2  & 5     &  1/6  & \ast \\
    2     & 1     & 4     &  1/2  &  1/6 \\
     1/5  &  1/4  & 1     &  1/6  &  1/7 \\
    6     & 2     & 6     & 1     &  1/2 \\
    \ast  & 6     & 7     & 2     & 1     \\
\end{array}
\right].
\]
Let $\mathbf{B}$ and $\mathbf{C}$ the logarithmic least squares and the eigenvalue optimal completions, respectively. It can be checked that $b_{15} = 0.1705$ and $c_{15} = 0.1798$, namely, the estimation of the missing comparison between the first and the last alternatives are different according to the two methods. But this is expected as the objective functions to be minimised are different, too.
\end{proof}

\begin{remark} \label{Rem1}
By cloning the second alternative, the example of Lemma~\ref{Lemma2} can be used to verify that the logarithmic least squares and the eigenvalue optimal completions might be different for incomplete pairwise comparison matrices of any order higher than five.
\end{remark}

The incomplete pairwise comparison matrices used as a counterexample in the proof of Lemma~\ref{Lemma2} is minimal with respect to both the number of alternatives and the number of missing entries. However, Lemma~\ref{Lemma2} does not mean that the logarithmic least squares and the eigenvalue optimal completions will always be different if the number of alternatives is at least five. For example, they imply the same completion if the incomplete pairwise comparison matrix can be made consistent with an appropriate choice of the missing entries.

\section{Conclusion} \label{Sec4}

In this paper, we have considered the optimal completion of a pairwise comparison matrix with missing entries if the unknown elements are substituted by variables and the inconsistency of the associated complete matrix is minimised. The logarithmic least squares and the eigenvalue optimal completions are found to be the same if the number of alternatives does not exceed four.

The finding is somewhat surprising because the logarithmic least squares and eigenvector methods can provide different priority vectors for pairwise comparison matrices of order four. Furthermore, some theoretical shortcomings of the eigenvector method such as left-right asymmetry \citep{BozokiRapcsak2008, IshizakaLusti2006, JohnsonBeineWang1979} and Pareto inefficiency \citep{BlanqueroCarrizosaConde2006, BozokiFulop2018} might be a problem if a decision-making problem contains four alternatives. According to Theorem~\ref{Theo1}, this issue becomes relevant only for $n \geq 5$ in the case of incomplete pairwise comparison matrices. Finally, since both approaches lead to the same optimal completion up to $n \leq 4$, one can ``expect'' from other completion methods for pairwise comparison matrices with missing entries to provide the same solution. Consequently, Theorem~\ref{Theo1} may present a kind of axiom for these techniques, eleven of them discussed by \citep{TekileBrunelliFedrizzi2023}.

Our result also brings up several interesting research questions such as:
\begin{itemize}
\item
Are there other classes of incomplete pairwise comparison matrices where the two approaches lead to the same estimation of missing entries?
\item
Does the equivalence hold if the optimal completion is obtained by minimising a third inconsistency index?
\item
When has an incomplete pairwise comparison matrix only one reasonable optimal completion?
\end{itemize}
Hopefully, all these directions will be investigated in the future.


\bibliographystyle{apalike}
\bibliography{All_references}

\end{document}